\numberwithin{equation}{section}
\font\tengothic=eufm10 scaled\magstep 1 \font\sevengothic=eufm7 scaled\magstep 1
\newtheorem{theorem}{Theorem}[section]
\newtheorem{corollary}[theorem]{Corollary}
\newtheorem{claim}[theorem]{Claim}
\newtheorem*{claim*}{Claim}
\theoremstyle{definition}
\newtheorem{definition}[theorem]{Definition} 
\newtheorem{remark}[theorem]{Remark}
\newcommand{\length}{\operatorname{length}}
\newcommand {\CCC}{\mathbb{C}}
\newcommand {\PP}{\mathbb{P}}
\begin{document}
\title[Identifiability 
of symmetric tensors]
{A criterion for detecting the identifiability 
of symmetric tensors of size three}

   \author{Edoardo Ballico}
   \address{Edoardo Ballico\\
Dept. of Mathematics\\
 University of Trento\\
38123 Povo (TN), Italy}
\email{ballico@science.unitn.it}

\author{Luca Chiantini}
      \address{Luca Chiantini\\
Universit\'a degli Studi di Siena\\
     Dipartimento di Scienze Matematiche e Informatiche\\
     Pian dei Mantellini, 44\\
     I -- 53100 Siena
     }
   \email{luca.chiantini@unisi.it}

\thanks{The authors were partially supported by MIUR and GNSAGA of INdAM (Italy).}

\subjclass{14N05, 15A69}
\keywords{postulation of finite sets; secant varieties of Veronese varieties}

\date{\today}

\begin{abstract}
We prove a criterion for the identifiability of symmetric
tensors $P$ of type $3\times \dots\times 3$, $d$ times, 
whose rank $k$ is bounded by $(d^2+2d)/8$. The criterion
is based on the study of the Hilbert function of a set of points
$P_1,\dots,  P_k$ which computes the rank of the tensor $P$.
 \end{abstract}



\maketitle


\section{Introduction}

The aim of this paper is to study criteria
which can assure that an explicitly given symmetric tensor, 
whose rank $k$ is known, is identifiable, i.e. 
it can be written {\it uniquely} (up to scalar multiplication
and permutations), as a sum of decomposable tensors.

Recently, new methods for studying
the identifiability of tensors are arising from the
theory of secant varieties to projective varieties,
and their tangential behavior.

In the paper, we deal with symmetric tensors (and their 
geometric counterpart: the space of a Veronese embedding
of a projective space). Let us introduce some
definition, in order to properly state the problem,
along with our achievements.
\smallskip
 
Let $\PP^n:=\PP^n_\CCC$ be a projective space over
the complex field.

Write $\nu_d$ for the $d$-th Veronese map, which sends $\PP^n$
to a space $\PP^N$, with $N=\binom{n+d}n -1$.
The embedding space $\PP^N$ can be seen as the space of
symmetric tensors (up to scalar multiplication) of
type $(n+1)\times (n+1)\times \cdots\times (n+1)$.
We call $n+1$ the {\it size} of these tensors.

The image $X:=\nu_d(\PP^n)$ corresponds to the subset
parameterizing decomposable symmetric tensors (as always: up to
scalar multiplication). The {\it rank} of $P\in\PP^N$
is the minimum $k$ for which there exists an expression
$$ P = P_1 + \dots + P_k$$
with $P_i\in X$ for all $i$.

Identifiability is related with the uniqueness of the
previous expression.

\begin{definition} 
We say that $P\in \PP^N$, of rank $k$, is {\it identifiable} 
if there is a unique expression of $P$ (up to scaling and permutations)
as sum of $k$ elements in $X$.
\end{definition}

In geometric terms, $X$ is a projective variety 
(the {\it $d$-th Veronese variety of $\PP^n$}) and the
rank of a tensor $P$ corresponds to the minimal $k$ such that
$P$ belongs to the {\it standard open subset} $U_k(X)$
of the secant variety $S_k(X)$, formed by $(k-1)$-spaces
spanned by $k$ distinct points of $X$.
\smallskip

Following the classical Terracini's
analysis of the tangent spaces to secant varieties,
one obtains criteria for detecting when a {\it general}
tensor $P$ of rank $k$ is identifiable. An account of
how this can be done can be found in \cite{CC}.
Let we recall briefly what happens for the {\it general}
symmetric tensor of rank $k$.

It is a general non-sense that when the dimension
of the secant variety $U_k(X)$ is not the expected value
(i.e. when $X$ is {\it $(k-1)$-defective}), then also identifiability 
fails. After the results of \cite{AH}, the cases in which 
the Veronese variety $\nu_d(\PP^n)$
is defective, are well known.
On the other hand, there are cases in which the dimension
attains the expected value, and nevertheless the general 
symmetric tensor is not identifiable.
For $n=2$, it is classically known that identifiability
of the generic symmetric tensor fails, besides the 
defective cases, only when  $d=3$ and $k=6$ (see \cite{AC}).
In higher dimension, by the results of \cite{CC}, and the analysis
of the tangential behaviour of Veronese varieties,
carried on by the first author in \cite{B},
one knows that the  general symmetric tensor of rank $k$
is identifiable, as soon as $k< (N+1)/(n+1)$, with
the only possible exception $(n,d)=(3,4)$ (and the defective cases,
listed in \cite{AH}).  

The previous methods, however, only tell
us about generic tensors, but do not apply to
detect whether or not a {\it specific} tensor $P$ is
identifiable.

\begin{remark} Indeed, if we know that the general tensor
of rank $k$ is identifiable, then we can say that
every point, of rank $k$, in the regular locus of $S_k(X)$
is identifiable, by the Zariski Main Theorem.

On the other hand, since the equations for secant
varieties are far from been known, it seems uneasy
to detect directly whether or not a given $P$ belongs
to the singular locus of $S_k(X)$.
\end{remark}

In a private conversation, Joseph Landsberg asked one
of us about the chance of  finding {\it some}
criteria for the identifiability of a specific, given tensor.

Landsberg himself, with Buczy\'{n}ski and Ginensky, found a 
criterion which works for symmetric tensors of any size and
dimension $d$, provided that the rank $k$ is at most
$(d+1)/2$ (see \cite{BGL}). The criterion thus works for
tensors whose rank increases linearly, with respect to $d$. 

Landsberg's problem amounts also to determine methods 
for certifying that a given point of the standard open subset
$U_k(X)\subset S_k(X)$, is not singular.
\smallskip

Following an idea developed by A. Bernardi and the first author
(see \cite{BB}), we are able to produce here,
for the case $n=2$ and in some range for the rank $k$,
a criterion for detecting identifiability.

Our method is based on the study of the Hilbert
function of a set of points
$Z=\{x_1,\dots, x_k\}$, such that
$P =\nu_d(x_1)+\cdots + \nu_d(x_k)$,
i.e. such that $P$ belongs to the linear span
$$ P\in \langle \nu_d(x_1), \dots, \nu_d(x_k)\rangle.$$

Let us recall the following:

\begin{definition}
A set of $k$ distinct points $Z\in\PP^n$ has
{\it general uniform position (GUP)} if for any
$m=1,\dots,k$, no subsets $Z'\subset Z$ of length $m$
belong to hypersurfaces of degree $u$, as soon as
$$ m\geq \binom{u+n}n.$$
 \end{definition}

It is known that general sets of points have GUP,
and the Hilbert function of points with GUP
is well known, when $n=2$, i.e. when $Z$ sits in a plane.

With this in mind, by using standard results 
for the Hilbert function of points in the plane
(we will refer to \cite{EP} for this), as well as by means of 
Lemma 8 in \cite{BB},
we are able to give a criterion for the identifiability of
symmetric tensors of size $3$, i.e. points in
the projective space of the Veronese variety $\nu_d(\PP^2)$.

\begin{theorem}\label{main}
Consider the Veronese variety $X=\nu_d(\PP^2)$, $n>2$, embedded
in the space $\PP^N$, $N=d(d+3)/2$. Let $P\in\PP^N$
be a point of rank $k$, $P=P_1+\cdots + P_k$, with 
$P_i=\nu_d(x_i)$. Assume that the subset $Z=\{x_1,\dots, x_k\}
\subset\PP^2$ has GUP, and
$$ k< \frac {d^2+2d}8.$$
Then $P$ is identifiable.
\end{theorem}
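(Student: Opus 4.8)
The plan is to argue by contradiction. Assume $P$ is \emph{not} identifiable; then, besides $Z$, there is a set $W=\{y_1,\dots ,y_k\}\subset\PP^2$, $W\neq Z$, with $P=\nu_d(y_1)+\dots+\nu_d(y_k)$. Since $k$ is the rank of $P$, both $\nu_d(Z)$ and $\nu_d(W)$ are linearly independent; $P$ lies in the two $(k-1)$-dimensional linear spaces $\langle\nu_d(Z)\rangle$ and $\langle\nu_d(W)\rangle$; and $P\notin\langle\nu_d(Z\cap W)\rangle$, since otherwise $P$ would have rank $\le |Z\cap W|<k$ (here $|Z|=|W|$ and $Z\neq W$ force $Z\cap W\subsetneq Z$). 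Hence $\langle\nu_d(Z)\rangle\cap\langle\nu_d(W)\rangle\supsetneq\langle\nu_d(Z\cap W)\rangle$, and, as $h^1(\cI_Z(d))=h^1(\cI_W(d))=0$ by the linear independence, Lemma~8 of \cite{BB} (the criterion for the failure of the Grassmann formula for such spans) yields
$$ h^1\bigl(\PP^2,\,\cI_{Z\cup W}(d)\bigr)\ \ge\ 1 .$$
Everything then reduces to proving that, under the hypotheses on $k$ and $Z$, this cannot happen.

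Put $B:=Z\cup W$, so $k<|B|\le 2k<\frac{d^{2}+2d}{4}$. I would record two families of constraints on $B$. First, because $Z\subset B$ has GUP, the standard description of the postulation of points of $\PP^2$ in uniform position (\cite{EP}) gives $h_Z(j)=\min\{\binom{j+2}{2},k\}$; in particular $Z$ imposes independent conditions in every degree $j\ge t$, where $\binom{t+1}{2}\le k<\binom{t+2}{2}$, and since $k<\frac{d^{2}+2d}{8}=\binom{(d+2)/2}{2}$ one has $t\le \frac{d}{2}-1$ (up to the obvious rounding), so for a suitable degree $\delta$ with $t\le\delta\le\frac{d}{2}$ there is a curve of degree $\delta$ through $Z$ whose linear system has empty base locus off $Z$. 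Second, $\nu_d(W)$ linearly independent forces, for every curve $C$ of degree $u$, the bound $|W\cap C|\le\binom{d+2}{2}-\binom{d-u+2}{2}$; in particular $W$, hence every subset of $W$, meets each line in at most $d+1$ points. The strategy is residuation: with $C_Z$ a general curve of degree $\delta$ through $Z$ one gets, from the exact sequence $0\to\cI_{W\setminus Z}(d-\delta)\to\cI_B(d)\to\cI_{Z,C_Z}(d)\to 0$,
$$ h^1\bigl(\cI_B(d)\bigr)\ \le\ h^1\bigl(C_Z,\cI_{Z,C_Z}(d)\bigr)\ +\ h^1\bigl(\PP^2,\cI_{W\setminus Z}(d-\delta)\bigr). $$
A Riemann--Roch computation on $C_Z$ (of geometric genus $\le\binom{\delta-1}{2}$, treated through its normalisation when singular, using the GUP of $Z$ to guarantee $C_Z$ can be chosen reduced and smooth along $Z$) makes the first term vanish, because $|Z|=k<\delta(d+3-\delta)$ follows from $k<\frac{d^{2}+2d}{8}$. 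One is left with $h^1(\cI_{W\setminus Z}(d-\delta))$, where $|W\setminus Z|\le k$ and $d-\delta\ge\frac{d}{2}$.

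The main work — and, I expect, the main obstacle — is to make this residual term vanish. One cannot quote linear independence of $W$ directly, because a subset of $W$ need not be in linearly general position for the smaller Veronese $\nu_{d-\delta}$; instead one iterates the same device, peeling off lines (and, if needed, curves of slightly higher degree) through the parts of $W\setminus Z$ that lie on too few low-degree curves, using the bound $|W\cap L|\le d+1$ at each stage to control the curve-part of the cohomology and $|W\setminus Z|\le k$ to control the number of stages, and appealing to the postulation results of \cite{EP} to kill whatever survives at the bottom of the recursion. The delicate point is purely numerical: all the inequalities produced along the way — relating $|B|\le 2k$, the degree $\delta\le\frac{d}{2}$ of the first auxiliary curve, its genus $\binom{\delta-1}{2}$, and the degrees and intersection numbers of the subsequent curves — are simultaneously satisfied \emph{exactly} under $k<\frac{d^{2}+2d}{8}$, and fail in general beyond it. This forces $h^1(\cI_B(d))=0$, contradicting the first paragraph; hence $P$ is identifiable. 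The subtleties to watch are the genericity of the auxiliary curves (smoothness along the relevant point sets, with estimates when it fails), the honest accounting in the peeling process, and checking that the bound obtained is the stated threshold rather than a weaker one of the same order of magnitude.
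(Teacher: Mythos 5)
Your opening paragraph is essentially the paper's first Claim and is correct: a second decomposition $Z'$ forces $W=Z\cup Z'$ to fail to impose independent conditions in degree $d$, i.e.\ $h^1(\cI_W(d))\ge 1$ (this step is elementary linear algebra on the two spans; the paper's Lemma~8 of \cite{BB} is reserved for a later, different step). The fatal problem is the target you then set yourself: the entire second half of your plan aims to prove that the hypotheses force $h^1(\cI_{Z\cup Z'}(d))=0$, and that statement is false for sets $Z'$ having all the properties you actually retain and use (cardinality $k$, linear independence of $\nu_d(Z')$). Take $d=8$, $k=9<10=(d^2+2d)/8$, let $Z$ be nine points with GUP, let $L$ be a line through exactly two points of $Z$, and let $Z'$ be nine further points on $L$. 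Then $\nu_d(Z')$ is linearly independent (nine points on a degree-$8$ rational normal curve), yet $W$ contains eleven collinear points, which impose at most $d+1=9$ conditions in degree $8$, so $h^1(\cI_W(8))\ge 2$. No residuation or line-peeling scheme can prove a false statement; the numerical bound $k<(d^2+2d)/8$ does not rescue it. The obstruction $h^1(\cI_W(d))>0$ genuinely occurs and cannot be argued away from the postulation of the union alone.

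What the paper does instead is exploit the positivity of $h^1$ rather than try to contradict it. From $Dh_W(d+1)>0$ and $\length(W)\le 2k$ one finds a plateau $Dh_W(j)=Dh_W(j+1)=m$ with $m\le u\le d/2-2$, and then Davis/Ellia--Peskine \cite{D,EP} produce a curve $M$ of degree $m$ concentrating the failure: $M$ cuts $W$ in a subscheme $A$ of length about $(m+1)d-m^2+m+2$, while the residual $W\setminus M$ imposes independent conditions in degree $d-m$. Only now does Lemma~8 of \cite{BB} enter, and this is the step your proposal has no analogue of: using a second time that $P$ lies in both spans, it yields $Z\setminus M=Z'\setminus M$, hence $|Z\cap M|=|Z'\cap M|\ge\length(A)/2$. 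The contradiction is then located \emph{inside $Z$}: GUP forbids that many points of $Z$ on a curve of degree $m$, forcing $d\le 2m$, against $m\le d/2-2$. In short, the proof must transfer the ``too many points on a low-degree curve'' phenomenon from $W$ to $Z$ via the agreement of the two decompositions off $M$; your plan never uses GUP of $Z$ as a position hypothesis (only its Hilbert function) and never uses the point $P$ after the first paragraph, and for that reason it cannot close.
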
  

By means of the Zariski Main Theorem, the previous Theorem 
can be rephrased in terms of the singular locus of $S_k(X)$.

\begin{corollary} 
Let $X$ be the Veronese surface $X=\nu_d(\PP^2)$ and
consider a subset $Z=\{x_1,\dots, x_k\}
\subset\PP^2$ with GUP. Assume  $ k< (d^2+2d)/8$
and consider the span
$$L=\langle \nu_d(x_1),\dots,\nu_d(x_k)\rangle$$
Then $L\cap U_k(X)$ meets the singular locus $Sing(S_k(X))$
only along a subset of $S_{k-1}(X)$.
\end{corollary}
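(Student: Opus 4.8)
The plan is to combine Theorem~\ref{main} with a Zariski--Main--Theorem reduction. Fix a point $Q\in L\cap U_k(X)$ with $Q\notin S_{k-1}(X)$; we must show that $S_k(X)$ is smooth at $Q$. Since $Z$ has GUP and $k<\binom{d+2}{2}$, the points $\nu_d(x_1),\dots,\nu_d(x_k)$ are linearly independent, so $L$ is a genuine $(k-1)$-plane; writing $Q$ in coordinates on $L$ and using $Q\notin S_{k-1}(X)$, none of these coordinates vanishes, so $Z$ is a decomposition of $Q$, and by Theorem~\ref{main} it is the unique one. In particular $Q$ is identifiable.

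I would then work with the abstract $k$-secant variety: let $\cA$ be the closure, inside $\mathrm{Sym}^k(X)\times\PP^N$, of the set of pairs $(\{p_1,\dots,p_k\},R)$ with $p_1,\dots,p_k\in X$ distinct and $R\in\langle p_1,\dots,p_k\rangle$, and let $\pi\colon\cA\to\PP^N$ be the second projection, whose image is $S_k(X)$. Over the locus $\cA^\circ$ where the $k$ points of $X$ are distinct and span a $\PP^{k-1}$, $\cA$ is a $\PP^{k-1}$-bundle over a smooth variety, hence smooth of dimension $3k-1$, and $\xi:=(Z,Q)$ lies in $\cA^\circ$. Identifiability of $Q$ gives $\pi^{-1}(Q)\cap\cA^\circ=\{\xi\}$, and $Q\notin S_{k-1}(X)$ excludes the boundary points (whose images lie in $S_{k-1}(X)$, by the reasoning of Theorem~\ref{main} and in particular Lemma~8 of \cite{BB}), so $\pi^{-1}(Q)=\{\xi\}$. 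Assuming in addition that the differential $d\pi_\xi$ is injective (treated below), $\pi$ is an immersion near $\xi$; since $\cA$ is smooth at $\xi$ it therefore maps a neighbourhood of $\xi$ isomorphically onto a locally closed smooth subvariety $V\subseteq S_k(X)$ of dimension $3k-1$ containing $Q$, and $S_k(X)$ being irreducible of dimension $\le 3k-1$ forces $V$ to be open in $S_k(X)$. Hence $S_k(X)$ is smooth at $Q$ --- this is the Zariski--Main--Theorem reduction alluded to. Since $Q$ ranged over all of $\bigl(L\cap U_k(X)\bigr)\setminus S_{k-1}(X)$, we conclude that $L\cap U_k(X)$ meets the singular locus of $S_k(X)$ only inside $S_{k-1}(X)$.

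It remains to check that $d\pi_\xi$ is injective, and this is the step I expect to be the main obstacle. By Terracini's Lemma the image of $d\pi_\xi$ equals the linear span $\langle T_{\nu_d(x_1)}X,\dots,T_{\nu_d(x_k)}X\rangle$, of projective dimension $3k-1-h^1\bigl(\PP^2,\cI_{2Z}(d)\bigr)$, where $2Z\subset\PP^2$ is the scheme of double points supported on $Z$; so $d\pi_\xi$ is injective precisely when $h^1(\cI_{2Z}(d))=0$. I would prove this vanishing along the lines of Theorem~\ref{main}: using that $Z$ has GUP and that $k<(d^2+2d)/8$, the Ellia--Peskine description of the Hilbert function of plane fat points (as in \cite{EP}) should force $2Z$ still to impose independent conditions on the curves of degree $d$. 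The difficulty is that GUP constrains only the reduced points $Z$, so handling the double structure $2Z$ requires redoing the residuation and curve-peeling estimates for the fat scheme, the delicate cases once more being those in which many points of $Z$ lie on a curve of small degree.
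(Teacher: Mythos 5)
Your reduction is the right one and, up to the last step, considerably more careful than what the paper records: the paper disposes of this corollary with the single sentence that it follows from Theorem~\ref{main} ``by means of the Zariski Main Theorem'', and you have correctly noticed that this cannot be the whole story. Set-theoretic uniqueness of the fibre of $\pi$ over $Q$ does not by itself make $S_k(X)$ smooth at $Q$ (the normalization of a cuspidal curve has a one-point fibre over the cusp, which is singular); what ZMT actually yields is the opposite implication, the one in the paper's earlier Remark: a \emph{normal} point of rank $k$ with finite fibre is identifiable. So the corollary genuinely needs the extra infinitesimal input you isolate, namely the injectivity of $d\pi_\xi$, equivalently $h^1(\PP^2,\cI_{2Z}(d))=0$.

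That is exactly where your proposal stops being a proof. You state the vanishing as something you ``would prove \dots along the lines of Theorem~\ref{main}'' and then point out the obstruction yourself: GUP is a hypothesis on the reduced scheme $Z$ only, while the vanishing concerns the double scheme $2Z$, and none of the tools actually deployed in the paper (the Ellia--Peskine analysis of $Dh_W$, Lemma 8 of \cite{BB}) is applied to fat points. Alexander--Hirschowitz gives the vanishing for \emph{generic} $Z$ in this numerical range, but the corollary concerns an arbitrary GUP set, and a priori such a set could still fail the Terracini condition. Until $h^1(\cI_{2Z}(d))=0$ (or some substitute giving unramifiedness of $\pi$ at $\xi$) is established under the stated hypotheses, the argument does not close; this is a genuine gap, not a routine verification --- and, to be fair, one the paper itself does not fill either. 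A smaller remark: the analysis of $\pi^{-1}(Q)$ and of the boundary of $\cA^\circ$ is not needed for your smoothness conclusion, since once $\pi$ is an immersion at $\xi$ the dimension count already makes the image of a neighbourhood of $\xi$ open in $S_k(X)$; so that part of your argument could be dropped, and the whole weight rests on the unproven $h^1$ vanishing.
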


Going back to Landsberg's problem, we notice that
the effectiveness of the criterion for deciding the 
identifiability of a given $P$ depends 
on how much we know about $P$.
In particular, we need to know:
\begin{itemize}
\item the rank $k$ of $P$;
\item one decomposition $P=P_1+\dots +P_k$, $P_i\in X$.
\end{itemize}
Then, assuming that we are in the range 
$ k< (d^2+2d)/8$ (quadratic, with respect to the dimension of
the tensor), it is easy to compute the set
$Z=\{x_1,\dots, x_k\}$, with $P_i=\nu_d(x_i)$, and 
see if it has GUP.

Although these assumptions require a certain
knowledge about the tensor $P$, we hope that 
the criterion could be effective, in some 
interesting cases.

On the other hand, the criterion has some intriguing geometric
aspects. To mention one: a link between the postulation
of $Z$ and the identifiability of points in $\langle Z\rangle$.

It is, in any event, a starting point. We cannot exclude that,
on the same lines, it will be possible to  produce criteria
with a wider range of applicability.

\section{Proof of the criterion}

We keep here the notation of the Introduction, from the geometric
point of view.

So, $X=\nu_d(\PP^2)$ is the $d$-th Veronese embedding of the plane
in $\PP^N$, $N=(d+3)d/2$. $P$ is a point of $\PP^N$, which 
has rank $k>1$. Fix $k$ points $x_1,\dots,
x_k$ of $\PP^2$ such that 
$$P\in\langle \nu_d(x_1),\dots, \nu_d(x_k)\rangle.$$

Write $Z=\{x_1,\dots,x_k\}$. We make the following
assumptions:
\begin{itemize}
\item $ k< (d^2+2d)/8$; 
\item $Z$ has GUP.
\end{itemize}

We want to prove that $P$ is identifiable.
\smallskip

Assume, on the contrary, that there is another subscheme
$Z'\subset \PP^2$, $Z'=\{y_1,\dots,y_k\}$, of length $k$, 
such that $P\in\langle \nu_d(Z')
\rangle$.

Call $W$ the union $W=Z\cup Z'$, which is a subscheme
of length  $w\leq 2k$. We will look carefully at the
Hilbert function $h_W$ of $W$ and at its difference function
$Dh_W$.

\begin{claim} $h_W(d)<w$, so that $Dh_W(d+1)>0$.
\end{claim}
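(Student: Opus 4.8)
The plan is to prove the inequality $h_W(d)<w$ by contradiction. Since $W$ is a reduced set of $w\le 2k$ points one always has $h_W(d)\le w$, so it suffices to exclude $h_W(d)=w$, i.e.\ the case in which $W$ imposes independent conditions on the curves of degree $d$; the consequence $Dh_W(d+1)>0$ will then be formal.

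First I would set up the linear algebra of the spans. Write $A=\langle\nu_d(Z)\rangle$ and $B=\langle\nu_d(Z')\rangle$, so that $P\in A\cap B$ by construction. For any finite set $S\subset\PP^2$ the span $\langle\nu_d(S)\rangle\subset\PP^N$ has dimension $h_S(d)-1$: under the apolarity pairing $\nu_d(x)$ is, up to scalar, the evaluation functional at $x$, so $\langle\nu_d(S)\rangle$ is the projectivized annihilator of $H^0(\cI_S(d))$, and $H^0(\cI_S(d))$ has codimension $h_S(d)$ in $H^0(\cO_{\PP^2}(d))$. Applying this to $Z$, to $Z'$, and to $W$, and using that $\langle\nu_d(W)\rangle=A+B$ since $\nu_d(W)=\nu_d(Z)\cup\nu_d(Z')$, Grassmann's formula gives
\[ \dim(A\cap B)=h_Z(d)+h_{Z'}(d)-h_W(d)-1. \]

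Now suppose $h_W(d)=w$. Since $Z,Z'\subset W$, they also impose independent conditions in degree $d$, so $h_Z(d)=h_{Z'}(d)=k$; likewise $Z\cap Z'$ imposes independent conditions, so $\dim\langle\nu_d(Z\cap Z')\rangle=|Z\cap Z'|-1$. As $w=2k-|Z\cap Z'|$, the displayed formula becomes $\dim(A\cap B)=2k-w-1=|Z\cap Z'|-1$, which together with the obvious inclusion $\langle\nu_d(Z\cap Z')\rangle\subseteq A\cap B$ forces $A\cap B=\langle\nu_d(Z\cap Z')\rangle$ (conceptually: the spans of subsets of a projectively independent configuration meet along the span of the common points). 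Hence $P\in\langle\nu_d(Z\cap Z')\rangle$. But $Z\ne Z'$ by the contradiction hypothesis and $|Z|=|Z'|=k$, so $|Z\cap Z'|\le k-1$: if $Z\cap Z'=\emptyset$ the span is empty and cannot contain $P$, whereas if $Z\cap Z'\ne\emptyset$ then $P$ lies in the span of at most $k-1$ points of $X$, forcing $\rank(P)\le k-1$, contrary to $\rank(P)=k$. Therefore $h_W(d)<w$.

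Finally, to pass from $h_W(d)<w$ to $Dh_W(d+1)>0$ I would invoke the standard behaviour of the Hilbert function of a zero-dimensional scheme in $\PP^2$ (see \cite{EP}): the first difference $Dh_W$ is the Hilbert function of an Artinian reduction, hence nonnegative, supported on an interval $\{0,\dots,s\}$, with $\sum_j Dh_W(j)=w$; thus $h_W$ increases strictly until it reaches its stable value $w$. Since $h_W(d)<w$ this gives $d<s$, so $Dh_W(d+1)=h_W(d+1)-h_W(d)>0$. The one point I expect to need genuine care is the equality $A\cap B=\langle\nu_d(Z\cap Z')\rangle$ under the assumption $h_W(d)=w$; everything else is bookkeeping. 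I also note that this Claim uses only $\rank(P)=k$: neither the GUP hypothesis nor the bound $k<(d^2+2d)/8$ is needed here, those entering only in the subsequent analysis of $Dh_W$.
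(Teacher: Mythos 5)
Your proof is correct and follows essentially the same route as the paper: both arguments hinge on the observation that $P$ lies in $\langle\nu_d(Z)\rangle\cap\langle\nu_d(Z')\rangle$ but cannot lie in $\langle\nu_d(Z\cap Z')\rangle$ (else its rank would drop below $k$), so $W$ fails to impose independent conditions in degree $d$. You merely make explicit, via Grassmann's formula, the dimension count that the paper leaves implicit, and spell out the standard strict monotonicity of $h_W$ needed to conclude $Dh_W(d+1)>0$.
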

\begin{proof}
By our first assumption on $P$, it turns out that 
the linear spans of both $\nu_d(Z)$ and $\nu_d(Z')$
have dimension $k$. Moreover they meet in a point
$P$ which cannot lie in the linear span of the intersection
$\nu_d(Z)\cap\nu_d(Z')$. It follows that
$\nu_k(W)$ does not impose $w=2k-\length(\nu_d(Z)\cap\nu_d(Z'))$
conditions to the hyperplanes of $\PP^N$, from which
the claims on $h_W$ follows at once.
\end{proof}

Now, we use the numerical assumption, together
with a knowledge of the main properties of
Hilbert functions of subsets of $\PP^2$.

\begin{claim}\label{conti}
Let $u$ be the integer such that:
$$ \frac{u^2+3u+2}2\leq k<\frac{(u+1)^2+3(u+1)+2}2.$$
Then $u+2\leq d/2$ and the function $Dh_W$ satisfies:
$$ Dh_W(i) =i+1 \mbox{ for } i=0,\dots,u.$$
\end{claim}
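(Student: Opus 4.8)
The plan is to treat the two assertions of the claim separately: the numerical inequality $u+2\le d/2$ is a short arithmetic consequence of the bounds defining $u$, whereas the description of $Dh_W$ in degrees $0,\dots,u$ is the substantive point, which I would reduce to the statement that $W$ is contained in no plane curve of degree $u$ and then settle using the GUP hypothesis on $Z$.

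For the inequality, I would argue directly. From $\frac{u^2+3u+2}{2}=\binom{u+2}{2}\le k$ and $k<\frac{d^2+2d}{8}$ one gets $4(u+2)(u+1)<d(d+2)$, i.e. $(2u+3)^2=4(u+2)(u+1)+1\le d(d+2)=(d+1)^2-1<(d+1)^2$; hence $2u+3<d+1$, which yields the asserted bound on $u$. This is essentially one line.

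For the shape of $Dh_W$, I would first recall from \cite{EP} the standard properties of the Hilbert function of a zero-dimensional subscheme $W\subset\PP^2$; the one needed here is $Dh_W(0)=1$ together with $0\le Dh_W(i)\le i+1$ for every $i\ge 0$ (the decreasing type of $Dh_W$ is also part of this package, and is used elsewhere in the proof). Summing these bounds over $i=0,\dots,u$ gives $h_W(u)=\sum_{i=0}^{u}Dh_W(i)\le\binom{u+2}{2}$, with equality if and only if $Dh_W(i)=i+1$ for all $i=0,\dots,u$; so it suffices to prove $h_W(u)=\binom{u+2}{2}$. The cohomology sequence of $0\to\cI_W(u)\to\cO_{\PP^2}(u)\to\cO_W\to 0$ gives $h_W(u)=\binom{u+2}{2}-h^0(\cI_W(u))$, so this equality is equivalent to $h^0(\cI_W(u))=0$, i.e. to $W$ lying on no curve of degree $u$. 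Since $Z\subseteq W$, it is enough that $Z$ lie on no curve of degree $u$; and $Z$ consists of $k\ge\binom{u+2}{2}$ points and has GUP, so applying the GUP condition with $m=k$ and this degree $u$ gives exactly that, proving the claim.

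I do not expect a deep obstacle: the argument assembles standard facts on plane Hilbert functions with the definition of GUP. The two points to watch are that the conclusion $Dh_W(i)=i+1$ over the whole range $i=0,\dots,u$ is recovered from the single value $h_W(u)=\binom{u+2}{2}$ — which is legitimate only because the bound $Dh_W(i)\le i+1$ is in force at every intermediate step — and that GUP is being applied to $Z$ itself (the subset of size $m=k$), which is allowed precisely because $k\ge\binom{u+2}{2}$ by the choice of $u$.
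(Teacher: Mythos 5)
Your treatment of the second assertion coincides with the paper's own argument, which consists entirely of the inequality $h^0(\cI_W(u))\le h^0(\cI_Z(u))=0$: the vanishing $h^0(\cI_Z(u))=0$ comes from applying GUP to all of $Z$, legitimate precisely because $k\ge\binom{u+2}{2}$, and your passage from $h^0(\cI_W(u))=0$ to $h_W(u)=\binom{u+2}{2}$ and then, via the termwise bound $Dh_W(i)\le i+1$, to $Dh_W(i)=i+1$ for $i=0,\dots,u$ is exactly the step the paper leaves implicit. That half is correct and is the same route as the paper's.

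The first assertion is where there is a genuine gap, and it is not one you can close as stated. Your chain correctly gives $(2u+3)^2\le(d+1)^2-1$, hence $2u+3\le d$, i.e.\ $u+2\le(d+1)/2$; but the claim asserts $u+2\le d/2$, i.e.\ $2u+4\le d$, which is strictly stronger and does not follow when $d$ is odd. In fact the asserted inequality is false in that range: for $d=7$ one has $(d^2+2d)/8=7.875$, so $k=7$ is allowed, the defining inequalities give $u=2$, and $u+2=4>7/2$ (similarly $d=5$, $k=4$, $u=1$). So the phrase ``which yields the asserted bound on $u$'' is a non sequitur. To be fair, the paper's own proof says only that the inequality ``follows immediately'' and suffers from exactly the same defect, so you have faithfully reproduced its reasoning; the saving grace is that the only thing used downstream (in the proof of the main theorem) is $u<d/2$, which does follow from $2u+3\le d$. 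If you replace the first assertion by the corrected bound $2u+3\le d$ (equivalently $u\le(d-3)/2$), your argument is complete and still supports the rest of the paper.
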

\begin{proof} The first inequality follows immediately
from the assumption $ k< (d^2+2d)/8$. The second one
follows from the fact that $Z$ has GUP, and thus
$$ h^0(I_W(u))\leq h^0(I_Z(u))=0.$$
\end{proof}

\begin{claim}\label{pianoro}
There exists some $j\leq d$ with
$$u+1> Dh_W(j) = Dh_W(j+1)>0.$$
\end{claim}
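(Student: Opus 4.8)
The plan is to read off the shape of $Dh_W$ from the general theory of Hilbert functions of zero-dimensional subschemes of $\PP^2$ (the input taken from \cite{EP}), and then force a plateau by a length count, arguing by contradiction.

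Recall that for any zero-dimensional $W\subset\PP^2$ one has $Dh_W(0)=1$, $0\le Dh_W(i)\le i+1$ for all $i$, and there is an integer $r$ with $Dh_W(i)=i+1$ for $0\le i\le r$ while $Dh_W$ is non-increasing on $\{r,r+1,\dots\}$; in particular $Dh_W$ is positive on an initial segment $\{0,\dots,s\}$ and vanishes afterwards. I then feed in the previous results: $Dh_W(i)=i+1$ for $i\le u$ (Claim \ref{conti}), so $r\ge u$; $d\ge 2u+3$ (immediate from $\binom{u+2}{2}\le k<(d^2+2d)/8$, as in the proof of Claim \ref{conti}, since $4\binom{u+2}{2}\cdot 2 +1=(2u+3)^2<(d+1)^2$); $Dh_W(d+1)>0$ (the first Claim), hence $s\ge d+1$ and $Dh_W(i)>0$ for all $0\le i\le d+1$; and $\sum_{i\ge 0}Dh_W(i)=\length(W)=w\le 2k<2\binom{u+3}{2}=(u+2)(u+3)$.

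Now assume, towards a contradiction, that no $j\le d$ satisfies $0<Dh_W(j)=Dh_W(j+1)<u+1$. Let $t$ be the largest index with $Dh_W(t)\ge u+1$; then $t\ge r\ge u$, and $t\le d$ (were $t\ge d+1$, monotonicity would give $Dh_W\ge u+1$ on all of $\{0,\dots,d+1\}$, and that alone forces $\sum_{i=0}^{d+1}Dh_W(i)>(u+2)(u+3)$ once $d\ge 2u+3$). By monotonicity past $r$ we have $Dh_W(i)\ge u+1$ for $u+1\le i\le t$, while for $t<i\le d+1$ we have $0<Dh_W(i)\le u$; the failure of the claim together with monotonicity then forces $Dh_W(t+1)>Dh_W(t+2)>\dots>Dh_W(d+1)\ge 1$. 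A strictly decreasing run of positive integers ending at $\ge 1$ has here at most $u$ terms and sums to at least $1+2+\dots+(d+1-t)$, so, splitting $\{0,\dots,d+1\}$ into $\{0,\dots,u\}\cup\{u+1,\dots,t\}\cup\{t+1,\dots,d+1\}$,
$$ w\ \ge\ \sum_{i=0}^{d+1}Dh_W(i)\ \ge\ \binom{u+2}{2}+(t-u)(u+1)+\binom{d-t+2}{2}. $$
With $T:=d+1-t\in\{1,\dots,u\}$ the right-hand side is convex in $T$ with minimum at $T=u+\tfrac12$, hence decreasing on $\{1,\dots,u\}$; its value at $T=u$ is $(u+1)^2+(d+1-2u)(u+1)=(u+1)(d+2-u)$. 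Thus $(u+1)(d+2-u)\le w<(u+2)(u+3)$, whereas $d\ge 2u+3$ gives $(u+1)(d+2-u)\ge(u+1)(u+5)\ge(u+2)(u+3)$ for every $u\ge 1$ — a contradiction. (For $u=0$, i.e. $k=2$, the claim is vacuous and identifiability already follows from the first Claim.) Hence such a $j$ exists.

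The delicate point is precisely this last estimate. The naive route — bound $t$ above by the weaker $\binom{u+2}{2}+(t-u)(u+1)\le w$, getting only $t\lesssim\tfrac32u$, and then apply pigeonhole to the indices $t+1,\dots,d+1$ — fails when $d$ is close to its minimum $2u+3$, since $d+1-u$ may be smaller than $\tfrac32u$. The fix is to keep the contribution $\binom{d-t+2}{2}$ of the (now necessarily strictly decreasing) tail in the count, so that both alternatives of the dichotomy, "$t$ large" and "$Dh_W\ge u+1$ throughout $[u+1,t]$", are charged against the single budget $w\le 2k$; together they overshoot $2\binom{u+3}{2}$. Everything else is the cited structure theory applied to $W$, plus the first Claim and Claim \ref{conti}.
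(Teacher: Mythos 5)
Your proof is correct and follows essentially the same route as the paper's: assuming no plateau of height below $u+1$ exists, you force $Dh_W$ to stay $\ge u+1$ up to some index $t$ and then to decrease strictly down to $Dh_W(d+1)\ge 1$, and the resulting lower bound $\length(W)\ge (u+1)(d+2-u)=(u+1)d-u^2+u+2$ is exactly the bound the paper derives, contradicting $\length(W)\le 2k$. The only differences are presentational: you make the paper's ``short computation'' explicit via the parameter $t$ and a minimization over $T=d+1-t$, and you must isolate the degenerate case $u=0$ because you compare against $2k<(u+2)(u+3)$ rather than against the paper's sharper inequality $2k\le (u+1)d-u^2+u+2$, which absorbs that case without special treatment.
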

\begin{proof}
First observe that the definition of $u$ and the
numerical assumption $ k< (d^2+2d)/8$ imply that:
\begin{equation}\label{numer}
2k\leq (u+1)d-u^2+u+2.
\end{equation}
On the other hand, if the quoted $j$ does not exist,
we must have
$$Dh_W(d+1-i) \geq i+1 \mbox{ for } i=0,\dots,u
$$
which gives, after a short computation,
$$\length(W)\geq (u+1)d-u^2+u+2,$$
a contradiction.
\end{proof}

Define the number $m$ as:
$$m=\min\{Dh_W(j): j\leq d and Dh_W(j) = Dh_W(j+1)>0\}.$$

By the previous claim, we know that $m\leq u$.

\begin{claim}
There exists a curve $M\subset \PP^2$, of degree $m$,
such that $M$ intersects $W$ in a subset $A$ of length
$a\leq (m+1)d-m^2+m+2.$
\end{claim}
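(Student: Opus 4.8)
\noindent\emph{Proof strategy.} The plan is to extract the curve $M$ from the plateau in $Dh_W$ by means of the structure theory of Hilbert functions of zero-dimensional subschemes of $\PP^2$, and then to bound $\length(M\cap W)$ by a count based on the shape of $Dh_W$ already pinned down in Claims \ref{conti} and \ref{pianoro} and on \eqref{numer}.

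First I would fix an integer $j\le d$ realizing the minimum in the definition of $m$, so that $Dh_W(j)=Dh_W(j+1)=m$. Since $Dh_W$ is the Hilbert function of an Artinian reduction of $W$ (a quotient of a polynomial ring in two variables), it is ``first increasing, then weakly decreasing''; by Claim \ref{conti} its increasing part is the staircase $Dh_W(i)=i+1$ for $0\le i\le u$, with $m\le u$, so that $Dh_W(i)>m$ for $m\le i\le u$ and, by monotonicity past the staircase, $Dh_W(i)\ge m$ for every $i$ with $m\le i\le j$. Moreover, exactly as in the proof of Claim \ref{pianoro}, the flat of height $m$ through degree $j$ cannot terminate before degree $d+2-m$: otherwise $Dh_W$, strictly below $m$ from then on, could not still be positive at degree $d+1$.

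Next I would apply Davis' theorem on the decomposition of zero-dimensional subschemes of $\PP^2$ (see \cite{EP}; this is the content of Lemma 8 of \cite{BB} in the present situation): the plateau $Dh_W(j)=Dh_W(j+1)=m$ yields a curve $M\subset\PP^2$ of degree $m$ for which, putting $A:=M\cap W$ and $B:=\operatorname{Res}_M(W)$ (the residual scheme of $W$ with respect to $M$), one has $\length(A)+\length(B)=w$, $\ Dh_W(i)=Dh_A(i)+Dh_B(i-m)$ for all $i$, and $Dh_A(i)=\min\bigl(Dh_W(i),m\bigr)$. Consequently
\[
a=\length(A)=\sum_i\min\bigl(Dh_W(i),m\bigr)=w-\sum_i\bigl(Dh_W(i)-m\bigr)^{+}\le w-\binom{u-m+2}{2},
\]
the last inequality because the degrees $i$ with $m\le i\le u$ already contribute $\sum_{i=m}^{u}(i+1-m)=\binom{u-m+2}{2}$ to the ``excess'' $\sum_i(Dh_W(i)-m)^{+}$; since $w\le 2k$ this gives $a\le 2k-\binom{u-m+2}{2}$.

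Finally one checks that $a\le (m+1)d-m^2+m+2=(m+1)(d-m+2)$. When $m=u$ the excess is at least $\binom{2}{2}=1$, hence $a\le w-1\le 2k-1<(u+2)(u+3)-1\le (u+1)(d-u+2)$, using $k<\binom{u+3}{2}$ and $d\ge 2u+4$ (Claim \ref{conti}). When $m<u$ one substitutes into $a\le 2k-\binom{u-m+2}{2}$ the bound $2k\le (u+1)d-u^2+u+2$ of \eqref{numer}, together with $m\le u$, $u+2\le d/2$ and $k<\binom{u+3}{2}$, and — where \eqref{numer} alone is too weak — the lower bound on $w$ produced by the fact that the flat at height $m$ reaches degree $\ge d+2-m$ (which caps how far $j$ can extend); a short but delicate arithmetic then yields the bound. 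The hard part is precisely this last numerical step: the inequality $a\le (m+1)(d-m+2)$ is sharp — it is essentially attained by a $Dh_W$ equal to the staircase $1,\dots,u+1$ followed at once by a long flat at height $m$ reaching close to degree $d$ — so the estimate must combine \emph{all} of the hypotheses $m\le u$, $u+2\le d/2$, $\binom{u+2}{2}\le k<\binom{u+3}{2}$ and $k<(d^2+2d)/8$, and the precise extent of the flat, rather than any of them in isolation.
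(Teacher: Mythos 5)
Your construction of $M$ coincides with the paper's: both arguments extract the curve from the plateau $Dh_W(j)=Dh_W(j+1)=m$ via the Davis/Ellia--Peskine decomposition, with $A=W\cap M$, $Dh_A=\min\{m,Dh_W\}$ and $Dh_W(i)=Dh_A(i)+Dh_B(i-m)$. That part is correct and is exactly what the paper does.

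The gap is in the length estimate, which is the actual content of the claim. You bound $a=w-\sum_i(Dh_W(i)-m)^{+}\le 2k-\binom{u-m+2}{2}$ and then assert that ``a short but delicate arithmetic'' yields $a\le(m+1)(d-m+2)$; it does not, and no choice of the global inequalities you list can make it do so. Concretely, take $u=5$, $m=1$, $d=14$, $k=27$: all standing hypotheses hold ($\binom{u+2}{2}=21\le k<\binom{u+3}{2}=28$, $k<(d^2+2d)/8=28$, $u+2\le d/2$), yet your bound gives $a\le 54-\binom{6}{2}=39$ while the claim requires $a\le 30$. In general, writing $s=u-m$, the comparison of $2k-\binom{s+2}{2}$ (with $2k$ estimated by \eqref{numer}) against $(m+1)(d-m+2)$ reduces to $s(d-u-m+1)\le\binom{s+2}{2}$, which fails for every $s\ge 1$ because $d-u-m+1\ge s+5$; and the extra input you invoke --- a \emph{lower} bound on $w$ coming from the plateau reaching degree $d+2-m$ --- points the wrong way when the quantity to be bounded is $a\le w-E$. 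What is missing is pointwise control of $Dh_A$ in high degrees. The paper's computation is the sum of the profile $\min\{i+1,\,m,\,d+2-i\}$ over $0\le i\le d+1$ (staircase at the start, ceiling $m$ imposed by the curve $M$, and a tail decreasing to zero at degree $d+1$, forced by the minimality of $m$ together with $Dh_W(d+1)>0$), namely $a\le\frac{m(m+1)}2+m(d+2-2m)+\frac{m(m+1)}2=m(d+3-m)\le(m+1)(d-m+2)$, the last inequality because $d-2m+2>0$. Your aggregate counting argument, which never caps how far $Dh_A$ can extend beyond degree $d+1$ nor how slowly it can decay there, cannot recover this.
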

\begin{proof}
This is an easy consequence of well known facts
on sets of points $W$ in the plane, whose function
$Dh_W$ has the behavior described in Claim \ref{pianoro}.

Namely (see e.g. \cite{D} or \cite{EP}, Proposition at p.112), 
since  
$$\max\{Dh_W(i)\}>m=  Dh_W(j) = Dh_W(j+1)>0,$$
we know that there exists a curve $M$ of degree $m$
which meets $W$ in a subset $A$ whose Hilbert function
is defined as:
$$ Dh_A= \min\{m,Dh_W\}.$$
It follows immediately that the length of $A$ is at least:
$$ a = \frac{m(m+1)}2 + m(d+2-2m) + \frac{m(m+1)}2, $$
which gives the claim.
\end{proof}

Now we have all the ingredients for the:
\smallskip

{\bf Proof of the main Theorem.} 
Define $B=W\setminus M$. By  \cite{EP} p.112, we know that
the function $Dh_B$ satisfies, for all $i$:
 $Dh_A(i)+Dh_B(i-m)=Dh_W(i)$.

Since $Dh_W(j)\leq m$, then  $ Dh_A(j)= \min\{m,Dh_W(j)\}=
Dh_W(j)$, so that one has $Dh_B(d-m)=0$. It follows from
\cite{BB}, Lemma 8, that  
$Z-M=Z'-M$, which implies that $Z\cap M$ has the same cardinality
as $Z'\cap M$.

Thus  $Z\cap M$ has cardinality:
$$\frac{\length(A)}2\geq \frac{(m+1)d-m^2+m+2}2,$$
and sits in a curve of degree $M$.
Since $Z$ has GUP, we get that:
$$ \frac{(m+1)d-m^2+m+2}2<\frac{m^2+3m+2}2.$$
This implies $d\leq 2m$,
which is impossible, since $m<u$ and $u< d/2$,
by Claim \ref{conti}.
\qed\smallskip

\begin{remark}
Let us notice that, with the same method, one
can prove a slightly stronger condition. Namely,
with the previous assumptions, it follows that
$P$ cannot belong to the linear span of another subset
$\nu_d(Z')$, with $\deg(Z')=k$, dropping the
assumption that $Z'$ is reduced.

Indeed, in this case, we may define $W=Z'\cup (Z\setminus Z')$.
The results on the Hilbert function of $0$-dimensional subsets 
of $\PP^2$, obtained in \cite{EP}, remain true even if
$W$ is not reduced, as well as Lemma 8 of \cite{BB}.
So, the previous arguments work verbatim.
\end{remark}

\end{document}